\documentclass[14pt]{extarticle}
\usepackage{extsizes}
\usepackage{amssymb,amsfonts,amsthm}
\usepackage{amsmath}
\usepackage[makeroom]{cancel}
\usepackage{mathtools}
\usepackage{mathrsfs,pifont}
\usepackage{slashed,mathabx} 
\usepackage[bbgreekl]{mathbbol}
\usepackage{ytableau} 
\usepackage{tikz}
\usetikzlibrary{calc}

\usepackage[top=1in, bottom=1.25in, left=0.75in,right=0.75in]{geometry}
\usepackage{authblk}

\usepackage[font={small,sl}]{caption}
\usepackage[all]{xy}
\usepackage{hyperref}

\newtheorem{theorem}{Theorem}[section]
\newtheorem{lemma}[theorem]{Lemma}
\newtheorem{corollary}[theorem]{Corollary}
\newtheorem{conjecture}{Conjecture}
\theoremstyle{definition}

\theoremstyle{remark}

\numberwithin{equation}{section}

\makeatletter
\def\blfootnote{\xdef\@thefnmark{}\@footnotetext}
\makeatother

\begin{document}

\title{Infinitesimal deformations of $\mathfrak{sl}_2$ with\\ a twisted Jacobi identity}
\author{Haoran Zhu\thanks{Division of Mathematical Sciences, Nanyang Technological University, 21 Nanyang Link, Singapore 637371. Email: zhuh0031@e.ntu.edu.sg} }

\date{}

\maketitle

\begin{abstract}


We show that whenever
\[
[\,\cdot,\cdot]_t = [\,\cdot,\cdot]_0 + t[\,\cdot,\cdot]_1,\qquad
\alpha_t = \mathrm{id} + t\alpha_1
\]
define an infinitesimal Hom--Lie deformation of $\mathfrak{sl}_2(\mathbb K)$ over $\mathbb K[t]/(t^2)$ and $(\mathfrak{sl}_2(\mathbb K),[\,\cdot,\cdot]_0,\alpha_1)$ is a Hom--Lie algebra, then the deformed bracket $[\,\cdot,\cdot]_t$ satisfies the ordinary Jacobi identity over $\mathbb K[t]$. This solves a conjecture of Makhlouf and Silvestrov from 2010.

\end{abstract}

{\noindent\it Keywords:} Hom--Lie algebra, formal deformation, infinitesimal deformation, Lie algebra \(\mathfrak{sl}_2\), deformation cohomology
    
{\noindent\it Mathematics Subject Classification:} 17B40, 17B56, 16S80, 17B37

\section{Introduction}

Throughout, we let $\mathbb K$ be a field of characteristic $0$. A \textbf{Hom--Lie algebra} over $\mathbb K$ is a triple $(V,[\cdot,\cdot],\alpha)$
consisting of a $\mathbb K$--vector space $V$, a bilinear skew--symmetric bracket
\[
[\cdot,\cdot]\colon V\times V\longrightarrow V,
\]
and a linear map $\alpha\colon V\to V$ such that the Hom--Jacobi identity
\[
\sum_{\circlearrowleft x,y,z} [\alpha(x),[y,z]]=0
\]
holds for all $x,y,z\in V$, where the sum is taken over the cyclic permutations of $(x,y,z)$.

Hom--Lie algebras were introduced by Hartwig, Larsson, and Silvestrov~\cite{HLS} as twisted analogues of Lie algebras, motivated in particular by $q$--deformations and discretisations of Lie algebras of vector fields and related constructions in the framework of quasi--Lie algebras; see \cite{MakSil-Forum,MakSil-HomAlg} and the references therein. This notion has since been generalised in many directions, giving rise to a variety of Hom-type structures such as Hom--associative~\cite{MakSil-HomAlg}, Hom--Hopf~\cite{CG}, Hom--Novikov~\cite{Y2}, BiHom--Lie algebras~\cite{GMM}, and Hom-quantum groups~\cite{Y1}.

Following the approach of Gerstenhaber~\cite{Ge} and of Nijenhuis--Richardson~\cite{NR} for associative and Lie algebras, Makhlouf and Silvestrov~\cite{MakSil-Forum} developed a theory of one--parameter formal deformations for Hom--associative and Hom--Lie algebras. In the Hom--Lie setting, a one--parameter formal deformation of a fixed Hom--Lie algebra $(V,[\cdot,\cdot]_0,\alpha_0)$ is given by formal power series
\[
[\cdot,\cdot]_t = \sum_{i\ge 0} t^i [\cdot,\cdot]_i,
\qquad
\alpha_t = \sum_{i\ge 0} t^i \alpha_i
\]
with coefficients $[\cdot,\cdot]_i\colon V\times V\to V$ and $\alpha_i\colon V\to V$, such that $(V[[t]],[\cdot,\cdot]_t,\alpha_t)$ is a Hom--Lie algebra over the ring of formal power series $\mathbb K[[t]]$. The first--order term $([\cdot,\cdot]_1,\alpha_1)$ is a $2$--Hom--cocycle for the cohomology theory introduced in \cite[\S 5]{MakSil-Forum}.

In analogy with classical deformation theory for Lie algebras, an infinitesimal deformation is obtained by truncating modulo $t^2$. Concretely, one works over the ring $\mathbb K[t]/(t^2)$ and considers
\[
[\cdot,\cdot]_t = [\cdot,\cdot]_0 + t[\cdot,\cdot]_1,
\qquad
\alpha_t = \alpha_0 + t\alpha_1,
\]
subject to the Hom--Jacobi identity in $\mathbb K[t]/(t^2)$. In this situation we shall refer to $([\cdot,\cdot]_1,\alpha_1)$ as the \textbf{infinitesimal} part of the deformation.

The simple Lie algebra $\mathfrak{sl}_2(\mathbb K)$ is classically rigid as a Lie algebra: every formal Lie algebra deformation of $\mathfrak{sl}_2(\mathbb K)$ is equivalent to the trivial one, a fact which reflects the vanishing of $H^2(\mathfrak{sl}_2(\mathbb K),\mathfrak{sl}_2(\mathbb K))$ (Whitehead's lemma). In \cite[\S 6]{MakSil-Forum}, Makhlouf and Silvestrov showed that, in the larger category of Hom--Lie algebras, one can nevertheless construct many non-trivial Hom--Lie deformations of $\mathfrak{sl}_2(\mathbb K)$, including Jackson-type deformations of $\mathfrak{sl}_2(\mathbb K)$ and $q$--deformations of Witt type. They also described all linear maps
\[
\alpha\colon \mathfrak{sl}_2(\mathbb K)\longrightarrow \mathfrak{sl}_2(\mathbb K)
\]
for which the fixed classical brackets
\begin{equation}\label{eq:sl2-bracket}
[x_1,x_2]_0 = 2x_2,\qquad
[x_1,x_3]_0 = -2x_3,\qquad
[x_2,x_3]_0 = x_1
\end{equation}
endow $(\mathfrak{sl}_2(\mathbb K),[\cdot,\cdot]_0,\alpha)$ with a Hom--Lie algebra structure; see \cite[Proposition~6.1]{MakSil-Forum}.

In the same section, they considered infinitesimal Hom--Lie deformations of $\mathfrak{sl}_2(\mathbb K)$, in the above sense, of the form
\[
[\cdot,\cdot]_t = [\cdot,\cdot]_0 + t[\cdot,\cdot]_1,
\qquad
\alpha_t = \alpha_0 + t\alpha_1,
\]
with $\alpha_0 = \mathrm{id}_V$, such that $(V\otimes_{\mathbb K} \mathbb K[t]/(t^2),[\cdot,\cdot]_t,\alpha_t)$ is a Hom--Lie algebra and $([\cdot,\cdot]_1,\alpha_1)$ is a $2$--Hom--cocycle for the Hom--Lie cohomology. They exhibited several explicit families of such pairs $([\cdot,\cdot]_1,\alpha_1)$. For these particular families they observed that, after a suitable choice of parameters, one may arrange $\alpha_1=\mathrm{id}_V$, so that $\alpha_t=(1+t)\,\mathrm{id}_V$ and the Hom--Jacobi identity reduces to the usual Jacobi identity. In other words, all these deformations actually give Lie algebras.

Makhlouf and Silvestrov then carried out further computations with a computer algebra system to construct additional infinitesimal Hom--Lie deformations of $\mathfrak{sl}_2(\mathbb K)$. In doing so they imposed an extra condition: they required that the first--order twisting $(V,[\cdot,\cdot]_0,\alpha_1)$ itself be a Hom--Lie algebra. Under this additional restriction they found that every example produced by the computer again yielded a Lie algebra. On the basis of this evidence they formulated the following conjecture in \cite[\S 6.1, Remark~6.3]{MakSil-Forum}.

\begin{conjecture}[Makhlouf--Silvestrov]
Let $(V,[\cdot,\cdot]_0)\cong\mathfrak{sl}_2(\mathbb K)$ and set $\alpha_0=\mathrm{id}_V$. Suppose that
\[
[\cdot,\cdot]_t = [\cdot,\cdot]_0 + t[\cdot,\cdot]_1,
\qquad
\alpha_t = \alpha_0 + t\alpha_1
\]
define an infinitesimal Hom-Lie deformation over $\mathbb K[t]/(t^2)$, that is to say, $(V\otimes_{\mathbb K} \mathbb K[t]/(t^2),[\cdot,\cdot]_t,\alpha_t)$ is a Hom--Lie algebra. 

Assume in addition that $(V,[\cdot,\cdot]_0,\alpha_1)$ is a Hom--Lie algebra. Then the resulting Hom--Lie algebra $(V,[\cdot,\cdot]_t,\alpha_t)$ is in fact a Lie algebra, in the sense that the bracket $[\cdot,\cdot]_t$ satisfies the ordinary Jacobi identity.
\end{conjecture}

Since $\alpha_t$ is invertible in $\mathbb K[t]/(t^2)$ (being a deformation of $\mathrm{id}_V$), if $[\cdot,\cdot]_t$ is a Lie bracket then one can \textit{untwist} the Hom–Lie structure by setting
\[
\{x,y\}_t := \alpha_t^{-1}([x,y]_t),
\]
which defines a Lie algebra structure $(V,\{\cdot,\cdot\}_t)$ over $\mathbb K[t]/(t^2)$. Conversely, starting from a Lie algebra $(V,\{\cdot,\cdot\}_t)$ and an algebra endomorphism $\alpha_t$, the Yau twisting
\(
[x,y]_t := \alpha_t(\{x,y\}_t)
\)
produces a Hom–Lie algebra. In this sense, the condition that $[\cdot,\cdot]_t$ is a Lie bracket means that $(V,[\cdot,\cdot]_t,\alpha_t)$ comes from a usual Lie algebra by Yau twisting.

The purpose of this note is to give a direct proof of the Makhlouf--Silvestrov conjecture by an explicit computation in a fixed basis of $\mathfrak{sl}_2(\mathbb K)$. Our main result can be stated as follows.

\begin{theorem}\label{thm:main}
Let $(V,[\cdot,\cdot]_0)\cong\mathfrak{sl}_2(\mathbb K)$. Fix a basis $x_1,x_2,x_3$ of $V$ and define $[\cdot,\cdot]_0$ by \eqref{eq:sl2-bracket}. Consider
\[
[\cdot,\cdot]_t = [\cdot,\cdot]_0 + t[\cdot,\cdot]_1,
\qquad
\alpha_t = \alpha_0 + t\alpha_1,
\qquad
\alpha_0=\mathrm{id}_V,
\]
where $[\cdot,\cdot]_1\colon V\times V\to V$ is bilinear and skew--symmetric and $\alpha_1\colon V\to V$ is linear. Assume that $([\cdot,\cdot]_t,\alpha_t)$ defines an infinitesimal Hom--Lie deformation of $(V,[\cdot,\cdot]_0)$ over $\mathbb K[t]/(t^2)$ and that $(V,[\cdot,\cdot]_0,\alpha_1)$ is a Hom--Lie algebra.

Then the bracket $[\cdot,\cdot]_t$ satisfies the usual Jacobi identity over $\mathbb K[t]$, and hence $(V,[\cdot,\cdot]_t)$ is a Lie algebra for every value of $t$. 

\end{theorem}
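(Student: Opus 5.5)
The plan is to expand the Hom--Jacobi identity for $([\cdot,\cdot]_t,\alpha_t)$ in powers of $t$ and read off linear conditions on $[\cdot,\cdot]_1$. Since $\alpha_0=\mathrm{id}_V$, the identity modulo $t^2$ splits into two parts. The coefficient of $t^0$ is the ordinary Jacobi identity for $[\cdot,\cdot]_0$, which holds automatically. The coefficient of $t^1$ reads
\[
\sum_{\circlearrowleft x,y,z}\Bigl([x,[y,z]_1]_0+[x,[y,z]_0]_1\Bigr)+\sum_{\circlearrowleft x,y,z}[\alpha_1(x),[y,z]_0]_0=0 .
\]
The extra hypothesis that $(V,[\cdot,\cdot]_0,\alpha_1)$ is a Hom--Lie algebra says precisely that the second cyclic sum vanishes. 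Hence the first sum vanishes, which means $[\cdot,\cdot]_1$ is a Chevalley--Eilenberg $2$--cocycle of $\mathfrak{sl}_2(\mathbb K)$ with adjoint coefficients. This already gives the $t^1$ coefficient of the ordinary Jacobi identity for $[\cdot,\cdot]_t$.

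Over $\mathbb K[t]$ the Jacobi identity has one further coefficient, at $t^2$: it requires $[\cdot,\cdot]_1$ itself to satisfy the Jacobi identity. The remaining task is therefore to show that every $2$--cocycle of $\mathfrak{sl}_2(\mathbb K)$ is a Lie bracket. By Whitehead's lemma every such cocycle is a coboundary, $[x,y]_1=[fx,y]_0+[x,fy]_0-f[x,y]_0$ for some $f\in\mathrm{End}(V)$. Since $\mathrm{Der}(\mathfrak{sl}_2)=\mathrm{ad}(\mathfrak{sl}_2)$ is $3$--dimensional, the cocycle space $Z^2$ is a $6$--dimensional subspace of the $9$--dimensional space $\mathrm{Hom}(\Lambda^2V,V)$ of skew brackets.

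To check Jacobi on this subspace, I would encode a skew bracket on the $3$--dimensional space $V$ by a $3\times 3$ matrix $M$ via $[x,y]=M(x\times y)$. Here $\Lambda^2V\cong V$ is identified using the basis, or better equivariantly using the Killing form, so that $\mathrm{Hom}(\Lambda^2V,V)\cong V\otimes V$. In these terms one has the classical criterion for $3$--dimensional algebras: writing $M=S+A$ with $S$ symmetric and $A$ skew, corresponding to a vector $a$, the Jacobi identity holds if and only if $Sa=0$. As $\mathfrak{sl}_2$--modules, $V\otimes V=\mathrm{Sym}^2V\oplus\Lambda^2V$ has pieces of dimensions $5+1$ and $3$. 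Since $Z^2$ is a $6$--dimensional submodule, it must be $\mathrm{Sym}^2V$, the $A=0$ part. Then $a=0$, so $Sa=0$ holds trivially and every cocycle is a Lie bracket. This settles the $t^2$ coefficient, and therefore $[\cdot,\cdot]_t$ is a Lie bracket over $\mathbb K[t]$.

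The main obstacle is making this identification precise in the fixed basis $x_1,x_2,x_3$ of \eqref{eq:sl2-bracket}. One must ensure that the duality $\Lambda^2V\cong V$ and the symmetric/skew splitting are taken with respect to the Killing form, so that the Jacobi criterion and the module decomposition use the same conventions. To avoid relying on this, I would verify the key step directly. I would write $[\cdot,\cdot]_1$ by its nine structure constants $c^k_{ij}$ and impose the three independent cocycle equations, or equivalently parametrize the cocycles by $f$. I would then check that the single nontrivial Jacobi component $\sum_{\circlearrowleft}[x_1,[x_2,x_3]_1]_1$, which has three coordinates, vanishes identically on that $6$--dimensional family. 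This is a finite polynomial computation.
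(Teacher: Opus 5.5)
Your proof is correct. For the decisive step it takes a different route from the paper.

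\textbf{Order $t^1$.} This half matches the paper's Lemmas~\ref{lem:E} and~\ref{lem:K1} in substance, but you do it coordinate-free. The $t^1$ coefficient of the Hom--Jacobi identity is the Chevalley--Eilenberg coboundary of $[\cdot,\cdot]_1$ plus the Hom--Jacobi sum for $(V,[\cdot,\cdot]_0,\alpha_1)$. The hypothesis therefore leaves exactly $d[\cdot,\cdot]_1=0$. Your formulation also makes clear that this part works for any Lie algebra, not just $\mathfrak{sl}_2$.

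\textbf{Order $t^2$.} Here the two proofs diverge. The paper writes out the Jacobiator $K_2$ of $[\cdot,\cdot]_1$ in coordinates and substitutes~\eqref{eq:E} by hand. You instead combine Whitehead's lemma ($Z^2=B^2$, of dimension $9-3=6$), equivariance of $d$, the multiplicity-free decomposition $V_4\oplus V_2\oplus V_0$ of the cochain space, and the Bianchi criterion for $3$-dimensional algebras. The paper's route is self-contained and needs no external theorems. Yours explains why $K_2$ vanishes, and isolates the one $\mathfrak{sl}_2$-specific input: every $2$-cocycle is itself a Lie bracket. Your proposed fallback computation is exactly the paper's Lemma~\ref{lem:K2}.

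\textbf{The convention issue.} The issue you flag is not a real obstacle, and the representation theory can even be skipped. The coordinate symmetric/skew split of $M$ transforms as $M\mapsto \det(g)\,g^{-1}Mg^{-T}$ under a change of basis, so it is already $GL(V)$-equivariant; no Killing form is needed. Moreover, the skew vector $a$ is, up to a nonzero scalar, the trace form $x\mapsto \mathrm{tr}\bigl([x,\cdot]_1\bigr)$, which is canonical. For a coboundary,
\[
[x,\cdot]_1=\mathrm{ad}_{fx}+[\mathrm{ad}_x,f],
\]
and its trace vanishes because $\mathfrak{sl}_2$ is unimodular. Hence $a=0$, and $Sa=0$ holds at once.

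\textbf{Link to the paper's relations.} The relations~\eqref{eq:E} are precisely the vanishing of this trace form:
\[
\mathrm{tr}[x_1,\cdot]_1=p_2+q_3,\qquad \mathrm{tr}[x_2,\cdot]_1=r_3-p_1,\qquad \mathrm{tr}[x_3,\cdot]_1=-(q_1+r_2).
\]
Each coefficient of $K_2$ is a linear combination of these three traces, which is why Lemma~\ref{lem:K2} collapses so cleanly.
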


The proof, given in the next section, is entirely elementary. We fix a basis of $\mathfrak{sl}_2(\mathbb K)$, write down the most general first--order perturbation $[\cdot,\cdot]_1$ and the most general twisting map $\alpha_1$ satisfying the condition that $(V,[\cdot,\cdot]_0,\alpha_1)$ is a Hom--Lie algebra, and then expand both the Hom--Jacobi identity for $([\cdot,\cdot]_t,\alpha_t)$ and the usual Jacobi identity for $[\cdot,\cdot]_t$. Under the additional constraint on $\alpha_1$, we obtain exactly the same first--order relations for the structure constants of $[\cdot,\cdot]_1$ from the two identities, and these relations already force the second--order part of the Jacobiator of $[\cdot,\cdot]_t$ to vanish. This implies that $[\cdot,\cdot]_t$ is a Lie bracket.
\section{Proof of the Makhlouf--Silvestrov conjecture}

In this section, we prove Theorem~\ref{thm:main}. Throughout, we fix a field
$\mathbb{K}$ of characteristic~$0$ and a $3$--dimensional $\mathbb{K}$--vector space $V$ with
basis $(x_1,x_2,x_3)$, endowed with the Lie bracket $[\cdot,\cdot]_0$
given by \eqref{eq:sl2-bracket}. Thus, $(V,[\cdot,\cdot]_0)$ is identified
with $\mathfrak{sl}_2(\mathbb{K})$.

\subsection{The constraint that $(V,[\cdot,\cdot]_0,\alpha_1)$ is Hom--Lie}

Let $\alpha_1\colon V\to V$ be a linear endomorphism. We write its matrix form
with respect to the chosen basis as
\begin{align}\label{eq:alpha1-matrix}
\alpha_1(x_1) &= a_{11}x_1 + a_{21}x_2 + a_{31}x_3,\\
\alpha_1(x_2) &= a_{12}x_1 + a_{22}x_2 + a_{32}x_3,\\
\alpha_1(x_3) &= a_{13}x_1 + a_{23}x_2 + a_{33}x_3,
\end{align}
so that $(a_{ij})_{1\leq i,j\leq 3}$ is the matrix of $\alpha_1$ in the
basis $(x_1,x_2,x_3)$.

The condition that $(V,[\cdot,\cdot]_0,\alpha_1)$ is itself a Hom--Lie
algebra can be expressed as the Hom--Jacobi identity
\[
\sum_{\circlearrowleft x,y,z}
[\alpha_1(x),[y,z]_0]_0 = 0
\qquad \text{for all }x,y,z\in V.
\]
By trilinearity and skew--symmetry, it suffices to impose this identity on
the triple $(x_1,x_2,x_3)$. Writing the three cyclic summands
\[
[\alpha_1(x_1),[x_2,x_3]_0]_0,\qquad
[\alpha_1(x_2),[x_3,x_1]_0]_0,\qquad
[\alpha_1(x_3),[x_1,x_2]_0]_0
\]
and using \eqref{eq:sl2-bracket} one checks that
\[
\sum_{\circlearrowleft}
[\alpha_1(x_i),[x_j,x_k]_0]_0
=
(2a_{22}-2a_{33})x_1 + (4a_{13}-2a_{21})x_2 + (-4a_{12}+2a_{31})x_3,
\]
where the sum is taken over the cyclic permutations $(i,j,k)$ of
$(1,2,3)$. Hence $(V,[\cdot,\cdot]_0,\alpha_1)$ is Hom--Lie if and only if
\begin{equation}\label{eq:H}
2a_{22}-2a_{33} = 0,\qquad
4a_{13}-2a_{21} = 0,\qquad
-4a_{12}+2a_{31} = 0.
\end{equation}
Equivalently,
\[
a_{22}=a_{33},\qquad a_{21}=2a_{13},\qquad a_{31}=2a_{12}.
\]
Thus $\alpha_1$ is determined by the six free parameters
$a_{11},a_{12},a_{13},a_{22},a_{23},a_{32}\in\mathbb{K}$, with
$a_{21},a_{31},a_{33}$ constrained by \eqref{eq:H}. This recovers the
$6$--parameter family described in \cite[Proposition~6.1]{MakSil-Forum},
written in slightly different coordinates.

\subsection{First--order Hom--Lie deformations}

We now consider an infinitesimal Hom--Lie deformation of
$(V,[\cdot,\cdot]_0,\alpha_0)$ over the ring $\mathbb{K}[t]/(t^2)$ of the form
\[
[\cdot,\cdot]_t = [\cdot,\cdot]_0 + t[\cdot,\cdot]_1,\qquad
\alpha_t = \mathrm{id}_V + t\alpha_1,
\]
where $[\cdot,\cdot]_1\colon V\times V\to V$ is bilinear and skew--symmetric.
We can write the most general such bracket as
\begin{equation}\label{eq:br1}
\begin{aligned}
[x_1,x_2]_1 &= p_1 x_1 + p_2 x_2 + p_3 x_3,\\
[x_1,x_3]_1 &= q_1 x_1 + q_2 x_2 + q_3 x_3,\\
[x_2,x_3]_1 &= r_1 x_1 + r_2 x_2 + r_3 x_3,
\end{aligned}
\end{equation}
with $p_i,q_i,r_i\in\mathbb{K}$, and the remaining brackets determined by
skew--symmetry.

The Hom--Jacobi identity for the deformed structure
$(V\otimes_{\mathbb{K}} \mathbb{K}[t]/(t^2),[\cdot,\cdot]_t,\alpha_t)$ reads
\begin{equation}\label{eq:HomJacobi-t}
\sum_{\circlearrowleft x,y,z}
[\alpha_t(x),[y,z]_t]_t = 0
\qquad\text{in }\mathbb{K}[t]/(t^2)
\end{equation}
for all $x,y,z\in V$. Again, it is enough to consider the triple
$(x_1,x_2,x_3)$.

Expanding the left--hand side of \eqref{eq:HomJacobi-t} in powers of $t$,
we obtain
\[
\sum_{\circlearrowleft}
[\alpha_t(x_i),[x_j,x_k]_t]_t
=
J_0 + t J_1 + t^2 J_2+ t^3 J_3,
\]
where $J_0,J_1,J_2, J_3\in V$. The constant term $J_0$ vanishes because
$(V,[\cdot,\cdot]_0)$ is a Lie algebra and $\alpha_0=\mathrm{id}_V$.
Since we are working in $\mathbb{K}[t]/(t^2)$, the Hom--Jacobi identity is
equivalent to the condition $J_1=0$.

A direct computation using \eqref{eq:sl2-bracket},
\eqref{eq:alpha1-matrix} and \eqref{eq:br1} gives
\[
J_1 =
\bigl(2a_{22}-2a_{33}-p_2-q_3\bigr)x_1
+\bigl(4a_{13}-2a_{21}+2q_1+2r_2\bigr)x_2
+\bigl(-4a_{12}+2a_{31}+2p_1-2r_3\bigr)x_3.
\]
Thus the infinitesimal Hom--Jacobi condition $J_1=0$ is equivalent to
\begin{equation}\label{eq:D}
\begin{aligned}
2a_{22}-2a_{33} &= p_2+q_3,\\
4a_{13}-2a_{21} &= -2(q_1+r_2),\\
-4a_{12}+2a_{31} &= -2(p_1-r_3).
\end{aligned}
\end{equation}

Combining \eqref{eq:D} with the relations \eqref{eq:H} expressing that
$(V,[\cdot,\cdot]_0,\alpha_1)$ is Hom--Lie, we obtain equations involving
only the structure constants of $[\cdot,\cdot]_1$.

\begin{lemma}\label{lem:E}
Assume that $(V,[\cdot,\cdot]_0,\alpha_1)$ is a Hom--Lie algebra, that is,
assume that \eqref{eq:H} holds.
Then the infinitesimal Hom--Jacobi condition \eqref{eq:D} is equivalent to
\begin{equation}\label{eq:E}
p_2 + q_3 = 0,\qquad
q_1 + r_2 = 0,\qquad
p_1 - r_3 = 0.
\end{equation}
\end{lemma}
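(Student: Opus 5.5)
This is a direct substitution argument. The left--hand sides of the three equations in \eqref{eq:D} are literally the same linear forms in the entries $a_{ij}$ that appear in \eqref{eq:H}, namely $2a_{22}-2a_{33}$, $4a_{13}-2a_{21}$ and $-4a_{12}+2a_{31}$. This is expected, since $J_1$ splits into a part linear in $\alpha_1$ and a part linear in $[\cdot,\cdot]_1$. The $\alpha_1$-part is exactly the Hom--Jacobiator of $(V,[\cdot,\cdot]_0,\alpha_1)$ computed in the previous subsection, because only the terms $[\alpha_1(x_i),[x_j,x_k]_0]_0$ contribute $\alpha_1$ at order $t$. The plan is therefore to record this coincidence explicitly, comparing the displayed formula for $J_1$ with the displayed Hom--Jacobiator of $\alpha_1$ coefficient by coefficient.

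For the forward direction, assume \eqref{eq:H} and \eqref{eq:D} both hold. By \eqref{eq:H}, each left--hand side in \eqref{eq:D} equals $0$. Hence
\[
p_2+q_3=0,\qquad -2(q_1+r_2)=0,\qquad -2(p_1-r_3)=0.
\]
Since $\operatorname{char}\mathbb K=0$, we may divide by $-2$ and obtain \eqref{eq:E}.

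For the converse, assume \eqref{eq:H} and \eqref{eq:E}. Then both sides of every equation in \eqref{eq:D} vanish, so \eqref{eq:D} holds. Equivalently, $J_1$ decomposes as the $\alpha_1$-Hom--Jacobiator, which vanishes by \eqref{eq:H}, plus the vector
\[
(-p_2-q_3)x_1+(2q_1+2r_2)x_2+(2p_1-2r_3)x_3,
\]
which vanishes by \eqref{eq:E}.

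There is no real obstacle here. The only point to verify carefully is that the $\alpha_1$-dependent part of $J_1$ really agrees term by term with the Hom--Jacobiator computed for \eqref{eq:H}, including signs and the factors $2$ and $4$. I would confirm this once from the definition rather than from the displayed formulas. Expanding $[(1+t\alpha_1)x_i,[x_j,x_k]_0+t[x_j,x_k]_1]_0+t[\cdot,\cdot]_1$ shows that the coefficient of $t$ is
\[
\sum_{\circlearrowleft}[\alpha_1(x_i),[x_j,x_k]_0]_0+\sum_{\circlearrowleft}\bigl([x_i,[x_j,x_k]_1]_0+[x_i,[x_j,x_k]_0]_1\bigr).
\]
The first sum is exactly the expression controlled by \eqref{eq:H}, and the second sum is independent of $\alpha_1$.
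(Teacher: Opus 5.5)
Your proof is correct and is essentially the paper's argument: \eqref{eq:H} makes the left-hand sides of \eqref{eq:D} vanish, which leaves \eqref{eq:E} after dividing by $-2$, and conversely \eqref{eq:H} together with \eqref{eq:E} makes both sides of \eqref{eq:D} zero. Your extra check that the $\alpha_1$-dependent part of the $t$-coefficient is exactly the Hom--Jacobiator of $(V,[\cdot,\cdot]_0,\alpha_1)$ is accurate and confirms the displayed formula for $J_1$, but the lemma as stated does not need it.
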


\begin{proof}
Subtracting \eqref{eq:H} from \eqref{eq:D} term by term yields
\eqref{eq:E} immediately. Conversely, if \eqref{eq:H} and \eqref{eq:E}
hold, then \eqref{eq:D} follows.
\end{proof}

Thus, under the additional constraint on $\alpha_1$, the infinitesimal
Hom--Jacobi condition is encoded entirely in the three linear equations
\eqref{eq:E} for the first--order bracket $[\cdot,\cdot]_1$.

\subsection{The ordinary Jacobi identity for $[\cdot,\cdot]_t$}

We now turn to the usual Jacobi identity for the bracket
\[
[\cdot,\cdot]_t = [\cdot,\cdot]_0 + t[\cdot,\cdot]_1
\]
on $V$. Define the Jacobiator of $[\cdot,\cdot]_t$ by
\[
K_t(x,y,z)
=
[x,[y,z]_t]_t + [y,[z,x]_t]_t + [z,[x,y]_t]_t.
\]
Once again it suffices to evaluate $K_t$ on the triple
$(x_1,x_2,x_3)$. A direct calculation using \eqref{eq:sl2-bracket} and
\eqref{eq:br1} shows that
\[
K_t(x_1,x_2,x_3)
= tK_1 + t^2 K_2,
\]
where
\[
K_1 =
(-p_2-q_3)x_1 + (2q_1+2r_2)x_2 + (2p_1-2r_3)x_3
\]
and
\[
\begin{aligned}
K_2 &= (p_1r_2-p_2r_1+q_1r_3-q_3r_1)x_1\\
&\quad +(-p_1q_2+p_2q_1+q_2r_3-q_3r_2)x_2\\
&\quad +(-p_1q_3-p_2r_3+p_3q_1+p_3r_2)x_3.
\end{aligned}
\]

The usual Jacobi identity for $[\cdot,\cdot]_t$ is equivalent to the
condition
\[
K_t(x_1,x_2,x_3) = 0
\quad\text{for all }t,
\]
which in turn is equivalent to the simultaneous vanishing of $K_1$ and
$K_2$.

We now relate these conditions to \eqref{eq:E}.

\begin{lemma}\label{lem:K1}
The equality $K_1=0$ is equivalent to the system \eqref{eq:E}.
\end{lemma}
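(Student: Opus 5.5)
The plan is to read the equivalence directly off the explicit coordinate expression for $K_1$ obtained above. Since $x_1,x_2,x_3$ form a basis of $V$, the vector
\[
K_1=(-p_2-q_3)x_1+(2q_1+2r_2)x_2+(2p_1-2r_3)x_3
\]
vanishes if and only if each of its three coordinates vanishes. So $K_1=0$ is equivalent to the system
\[
-(p_2+q_3)=0,\qquad 2(q_1+r_2)=0,\qquad 2(p_1-r_3)=0.
\]
Since $\operatorname{char}\mathbb K=0$, we have $2\neq 0$ and the factor $-1$ is a unit, so we can divide through. The system then becomes exactly $p_2+q_3=0$, $q_1+r_2=0$, $p_1-r_3=0$, which is \eqref{eq:E}. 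This argument runs in both directions simultaneously, because every step is an equivalence.

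The only real content lies in the formula for $K_1$ itself, so I would double-check it independently. The coefficient of $t$ in $K_t(x_1,x_2,x_3)$ is
\[
\sum_{\circlearrowleft}\bigl([x_i,[x_j,x_k]_1]_0+[x_i,[x_j,x_k]_0]_1\bigr),
\]
which is the Chevalley--Eilenberg coboundary of $[\cdot,\cdot]_1$, up to sign. Concretely, I would check the $x_1$-coefficient. The bracket $[x_1,[x_2,x_3]_1]_0$ contributes $2r_2x_2-2r_3x_3$. The bracket $[x_2,[x_3,x_1]_1]_0=[x_2,-q_1x_1-q_2x_2-q_3x_3]_0$ contributes $-q_3x_1+2q_1x_2$. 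The bracket $[x_3,[x_1,x_2]_1]_0$ contributes $-p_2x_1+2p_1x_3$. The second-type terms are $[x_1,x_1]_1=0$, $[x_2,-2x_3]_1=-2[x_2,x_3]_1$ and $[x_3,2x_2]_1=-2[x_2,x_3]_1$. I need to recheck these against the stated $K_1$.

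This is the one place where a sign slip could occur. However, the comparison with $J_1$ offers a consistency check. At $\alpha_1=0$, the Hom--Jacobi expansion $J_1$ reduces precisely to $K_1$, and indeed the displayed $J_1$ specializes to $(-p_2-q_3)x_1+(2q_1+2r_2)x_2+(2p_1-2r_3)x_3$. So I would take the stated formula for $K_1$ as the paper's computation, and confirm it via this specialization, since the $[\cdot,\cdot]_1$-dependent part of $J_1$ is independent of $\alpha_1$.

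Thus the main (and mild) obstacle is only verifying the coefficient computation. The logical step afterwards is immediate from linear independence of the basis and invertibility of $2$ in $\mathbb K$.
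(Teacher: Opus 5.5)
Your argument is correct and is exactly the paper's proof: read off the three coordinates of $K_1$ in the basis $x_1,x_2,x_3$ and use $\operatorname{char}\mathbb K=0$ to cancel the factors $-1$ and $2$. In your optional recheck of the formula there is a sign slip: $[x_3,x_1]_0=2x_3$, not $-2x_3$, so the second-type terms are $2[x_2,x_3]_1$ and $-2[x_2,x_3]_1$ and cancel, and together with your (correct) first-type terms this confirms the displayed $K_1$ directly, without needing the comparison with $J_1$.
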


\begin{proof}
The coefficients of $x_1,x_2,x_3$ in $K_1$ are $-p_2-q_3$,
$2q_1+2r_2$ and $2p_1-2r_3$, respectively. Since the characteristic of
$\mathbb{K}$ is zero, these coefficients vanish if and only if \eqref{eq:E} holds.
\end{proof}

\begin{lemma}\label{lem:K2}
Assume that \eqref{eq:E} holds. Then $K_2=0$. Equivalently, under the
relations \eqref{eq:E}, the usual Jacobi identity for $[\cdot,\cdot]_t$
holds for all $t$.
\end{lemma}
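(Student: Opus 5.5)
The plan is to prove this by direct substitution into the explicit formula for $K_2$ obtained above. The relations \eqref{eq:E} let us eliminate three of the nine structure constants of $[\cdot,\cdot]_1$:
\[
q_3=-p_2,\qquad r_2=-q_1,\qquad r_3=p_1 .
\]
I would substitute these into each of the three coordinates of $K_2$ and check that each one collapses to zero identically in the remaining free parameters $p_1,p_2,p_3,q_1,q_2,r_1$. No further input from the Hom--Lie structure is needed. This is because Lemma~\ref{lem:E} and Lemma~\ref{lem:K1} have already reduced both the infinitesimal Hom--Jacobi condition and $K_1=0$ to \eqref{eq:E}.

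The coordinate-by-coordinate check runs as follows.

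\textbf{The $x_1$-coefficient.} It becomes $-p_1q_1-p_2r_1+q_1p_1+p_2r_1$. The terms cancel in pairs.

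\textbf{The $x_2$-coefficient.} It becomes $-p_1q_2+p_2q_1+q_2p_1-p_2q_1$. Again the terms cancel in pairs, since $q_3r_2=(-p_2)(-q_1)=p_2q_1$.

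\textbf{The $x_3$-coefficient.} It becomes $p_1p_2-p_2p_1+p_3q_1-p_3q_1$. This also vanishes. More transparently, it can be regrouped as $-p_1(p_2+q_3)+p_3(q_1+r_2)$, which vanishes directly from \eqref{eq:E} without any substitution.

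Indeed, the cleanest presentation is to rewrite each coordinate of $K_2$ as a combination with polynomial coefficients of the three linear forms $p_2+q_3$, $q_1+r_2$ and $p_1-r_3$. Then vanishing under \eqref{eq:E} is immediate. For instance, the $x_1$-coefficient equals $p_1(r_2+q_1)-r_1(p_2+q_3)+q_1(r_3-p_1)$. I would aim to find such an expression for the $x_2$-coefficient as well.

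Once $K_2=0$ is established, the equivalent formulation follows directly. By Lemma~\ref{lem:K1}, \eqref{eq:E} also gives $K_1=0$. Hence $K_t(x_1,x_2,x_3)=tK_1+t^2K_2=0$. By trilinearity and skew-symmetry of the Jacobiator, this gives the ordinary Jacobi identity for $[\cdot,\cdot]_t$ for all $t$.

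The only real obstacle is trusting the displayed formula for $K_2$, which was stated as a direct computation. If I wanted an independent check, I would use the fact that $K_2$ is exactly the Jacobiator of $[\cdot,\cdot]_1$ on $(x_1,x_2,x_3)$. I would recompute it from \eqref{eq:br1} and compare signs before performing the substitution.
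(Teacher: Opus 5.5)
Your proof is correct and is essentially the paper's: both substitute the relations \eqref{eq:E} into the three coordinates of $K_2$. The paper eliminates $p_2$ rather than $q_3$, so it ends with multiples of $p_2+q_3$ instead of exact cancellation.

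One slip in your aside: the $x_3$-coefficient actually equals $-p_1(p_2+q_3)+p_3(q_1+r_2)+p_2(p_1-r_3)$. Your regrouping drops the last term, which is harmless since it also vanishes under \eqref{eq:E}. The uniform rewriting you were after is $K_2=(q_1+r_2)[x_1,x_2]_1+(r_3-p_1)[x_1,x_3]_1-(p_2+q_3)[x_2,x_3]_1$, and this also gives the $x_2$-coefficient.
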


\begin{proof}
From \eqref{eq:E} we have
\[
p_2=-q_3,\qquad r_2=-q_1,\qquad r_3=p_1.
\]
Substituting these relations into the coefficients of $K_2$ we obtain:
\begin{itemize}
\item The coefficient of $x_1$ is
\[
p_1r_2-p_2r_1+q_1r_3-q_3r_1
= p_1(-q_1) - p_2r_1 + q_1p_1 - q_3r_1
= -(p_2+q_3)r_1 = 0.
\]
\item The coefficient of $x_2$ is
\[
\begin{aligned}
-p_1q_2+p_2q_1+q_2r_3-q_3r_2
&= -p_1q_2+p_2q_1+q_2p_1-q_3(-q_1)\\
&= (p_2+q_3)q_1 = 0.
\end{aligned}
\]
\item The coefficient of $x_3$ is
\[
\begin{aligned}
-p_1q_3-p_2r_3+p_3q_1+p_3r_2
&= -p_1q_3-p_2p_1+p_3q_1+p_3(-q_1)\\
&= -p_1(q_3+p_2) = 0.
\end{aligned}
\]
\end{itemize}
Thus all three coefficients vanish, and hence $K_2=0$.
\end{proof}

We can now complete the proof of our main theorem.

\begin{proof}[Proof of Theorem~\ref{thm:main}]
By hypothesis, $(V,[\cdot,\cdot]_0,\alpha_1)$ is a Hom--Lie algebra, so
\eqref{eq:H} holds. Since $([\cdot,\cdot]_t,\alpha_t)$ is an infinitesimal
Hom--Lie deformation, the Hom--Jacobi identity \eqref{eq:HomJacobi-t}
holds modulo $t^2$, and hence \eqref{eq:D} is satisfied. By
Lemma~\ref{lem:E} this is equivalent to the linear relations
\eqref{eq:E} on the structure constants of $[\cdot,\cdot]_1$.

Lemma~\ref{lem:K1} shows that \eqref{eq:E} is equivalent to the vanishing
of the coefficient $K_1$ of $t$ in the Jacobiator
$K_t(x_1,x_2,x_3)$, and Lemma~\ref{lem:K2} shows that the same relations
force the coefficient $K_2$ of $t^2$ to vanish as well. Thus
$K_t(x_1,x_2,x_3)=0$ for all $t$. By trilinearity and skew--symmetry this
implies that the Jacobi identity holds for all triples $(x,y,z)\in V^3$.

Therefore $[\cdot,\cdot]_t$ is a Lie bracket on $V$ for every value of
$t$, and the associated Hom--Lie algebras $(V,[\cdot,\cdot]_t,\alpha_t)$
are Lie algebras in the sense of the Makhlouf--Silvestrov conjecture.
\end{proof}


\begin{corollary}[Makhlouf-Silvestrov conjecture]\label{cor:MS-conjecture}
Every infinitesimal Hom--Lie deformation
\[
[\cdot,\cdot]_t = [\cdot,\cdot]_0 + t[\cdot,\cdot]_1,
\qquad
\alpha_t = \mathrm{id}_V + t\alpha_1
\]
of $(\mathfrak{sl}_2(\mathbb K),[\cdot,\cdot]_0)$ over $\mathbb K[t]/(t^2)$ such that $(\mathfrak{sl}_2(\mathbb K),[\cdot,\cdot]_0,\alpha_1)$ is a Hom--Lie algebra is in fact a Lie algebra, in the sense that the bracket $[\cdot,\cdot]_t$ satisfies the ordinary Jacobi identity.
\end{corollary}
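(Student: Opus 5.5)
The corollary is a restatement of Theorem~\ref{thm:main}, so the plan is to deduce it from that theorem. The only work is to check that the hypotheses of the corollary match those of the theorem once a basis is fixed.

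First, fix an isomorphism $(V,[\cdot,\cdot]_0)\cong\mathfrak{sl}_2(\mathbb K)$. Choose a standard $\mathfrak{sl}_2$-triple $x_1,x_2,x_3$, i.e.\ $h,e,f$, so that the bracket takes exactly the form \eqref{eq:sl2-bracket}. Such a basis exists for $\mathfrak{sl}_2(\mathbb K)$ over any field of characteristic $0$, namely the matrix units $h=\mathrm{diag}(1,-1)$, $e=E_{12}$, $f=E_{21}$. Nothing is lost by this choice: the hypotheses (the Hom--Jacobi identity for $([\cdot,\cdot]_t,\alpha_t)$ modulo $t^2$, and for $([\cdot,\cdot]_0,\alpha_1)$) and the conclusion (the Jacobi identity for $[\cdot,\cdot]_t$) are all coordinate-free. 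Transporting $[\cdot,\cdot]_1$ and $\alpha_1$ along the isomorphism therefore preserves every condition involved.

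Next, write $[\cdot,\cdot]_1$ and $\alpha_1$ in this basis as in \eqref{eq:br1} and \eqref{eq:alpha1-matrix}. The assumption that $(\mathfrak{sl}_2(\mathbb K),[\cdot,\cdot]_0,\alpha_1)$ is Hom--Lie gives \eqref{eq:H}. The infinitesimal Hom--Lie deformation assumption gives \eqref{eq:D}. Applying Theorem~\ref{thm:main}, via Lemmas~\ref{lem:E}, \ref{lem:K1} and \ref{lem:K2}, yields $K_1=K_2=0$. Hence $K_t(x_1,x_2,x_3)=0$ identically in $t$.

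Finally, extend from the basis triple to all triples. The Jacobiator of a skew-symmetric bracket is trilinear and alternating. On a $3$-dimensional space, its value on $(x_1,x_2,x_3)$ therefore determines it on all of $V^3$, over $\mathbb K[t]$ as well. So $[\cdot,\cdot]_t$ satisfies the ordinary Jacobi identity.

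The only point requiring care, rather than a real obstacle, is the basis-independence step, since \eqref{eq:H}, \eqref{eq:D} and \eqref{eq:E} are written in the specific coordinates \eqref{eq:sl2-bracket}. I would make explicit that all three identities are natural under linear isomorphisms that conjugate $\alpha_1$ accordingly.
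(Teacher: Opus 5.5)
Your proposal is correct and follows the paper's route. The paper proves the corollary by a one-line appeal to Theorem~\ref{thm:main}, which already contains the lemma chain and the reduction to the triple $(x_1,x_2,x_3)$. Your additional step, choosing an $\mathfrak{sl}_2$-triple that realises \eqref{eq:sl2-bracket} and noting that all hypotheses and the conclusion are invariant under transport of structure, just spells out what the paper leaves implicit in ``applied with $V\cong\mathfrak{sl}_2(\mathbb K)$''.
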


\begin{proof}
This is an immediate consequence of Theorem~\ref{thm:main}, applied with $V \cong \mathfrak{sl}_2(\mathbb K)$ and $\alpha_0=\mathrm{id}_V$.
\end{proof}

\end{document}